\newtheorem{theorem}{Theorem}[section]
\newtheorem{example}[theorem]{Example}
\newtheorem{conjecture}[theorem]{Conjecture}
\newcommand{\qed}{\hfill $\square$\medskip}
\begin{document}

\def\nt{\noindent}

\title{Amicable numbers and their connection to the Euler totient function}

\author{
	Ali Reza Mavaddat$^1$ \and Saeid Alikhani$^2$\footnote{Corresponding author}
}


\maketitle

\begin{center}
	$^1$Independent researcher, Abdol Motalleb Blvd, Mashhad, Iran\\
	 {\tt alireza.mavaddat@gmail.com}

\medskip
$^2$Department of Mathematical Sciences, Yazd University, Yazd, Iran\\
{\tt alikhani@yazd.ac.ir}

\end{center}
	
\begin{abstract}
	 A pair of numbers is amicable if each number equals the sum of the proper divisors of the other. This paper after exploring the history and evolution of amicable numbers, introduces a novel characterization of amicable pairs whose greatest common divisor is a power of two, using their distinct prime factorizations. Specifically, we examine pairs of the forms \(A=2^n ab, B=2^n cd\), \(A=2^n abc, B=2^n de\), and \(A=2^n abc, B=2^n def\). From these configurations, we establish explicit symmetric identities that relate the sum \(\varphi(A)+\varphi(B)\) of Euler's totient functions directly to the odd prime factors of \(A\) and \(B\).
\end{abstract}

\noindent{\bf Keywords:} amicable numbers, prime, Euler's totient function.

\medskip
\noindent{\bf AMS Subj.\ Class.:}  11A25, 11N25.


	\section{Introduction}
	
		One of the most remarkable sums in number theory is the aliquot sum $s(n)$ of a positive integer $n$. It is the sum of all proper divisors of $n$, that is, all divisors of $n$ other than $n$ itself. So
	\[
	s(n)=\sum_{d|n, d<n} d.
	\]
	
	As usual $\sigma(n)$ is the sum of all the divisors of $n$. Thus $\sigma(n)=s(n)+n$. 
	 As usual the abbreviations gcd and lcm stand for the greatest common divisor and the least common multiple. 	
	Two natural numbers $m$ and $n$ are said to be amicable, if
	$\sigma(m)=\sigma(n)=m+n$. 	
	If $m=n$, then $m$ is perfect. It is well known that the smallest perfect number is $6$. The smallest pair of amicable numbers, $(220,284)$, was known to the Pythagoreans. Although more than 2000 years have passed, it is still unknown whether infinitely many amicable pairs exist.
		
	The first significant rule established for finding amicable pairs is attributed to Thabit ibn Qurra, a 9th-century mathematician (826--901 AD), which is as follows:
	
	\[ p = 3 \cdot 2^{n-1} - 1, \]
	\[ q = 3 \cdot 2^n - 1, \]
	\[ r = 9 \cdot 2^{2n-1} - 1, \]
	\[ (M = 2^n \cdot p \cdot q \quad , \quad N = 2^n \cdot r).
	 \]
		Using Thabit ibn Qurra's rule, only three amicable pairs have been discovered so far. The first pair $(220, 284)$ is attributed to the Pythagoreans (300 BC).
		The pair $(17296, 18416)$ was first discovered by Ibn al-Banna al-Marrakushi in the 13th century AD. Later, in 1636 AD, Fermat announced in a letter to Mersenne that the numbers $(18416, 17296)$ are an amicable pair \cite{Costello}.
		The pair $(9363584, 9437056)$ was discovered in the 17th century by the Iranian mathematician Muhammad Baqir Yazdi, though it is attributed to Descartes in European texts \cite{Borho}.
	
	Thabit ibn Qurra's formula only yields results for:
	\[
	 n = 2, 4, 7,
	  \]
	and a fourth pair has not yet been discovered using this formula.
		In the decades between 1700 and 1710, the great Euler presented a list of 64 amicable pairs. It was later realized in 1909 and 1914 that two of these pairs were not actually amicable.
		Euler provided a more general formula than Thabit ibn Qurra's rule for finding amicable pairs; in fact, Thabit ibn Qurra's formula is a special case of Euler's formula. 	
	The formula is as follows:
	\[ A = 2^n \cdot p \cdot q, \]
	\[ B = 2^n \cdot r, \]
	\[ p = 2^m \cdot (2^{n-m} + 1) - 1, \]
	\[ q = 2^n \cdot (2^{n-m} + 1) - 1, \]
	\[ r = 2^{n+m} \cdot (2^{n-m} + 1)^2 - 1.
	\]
		All three numbers $p, q, r$ must be prime numbers.

	The concept of amicable numbers has undergone various generalizations since the early 20th century. In 1913, Dickson \cite{5} defined a set of natural numbers $\{n_1, n_2, n_3\}$ as an {amicable triple} if:
	\begin{equation*}
	\sigma(n_1) = \sigma(n_2) = \sigma(n_3) = n_1 + n_2 + n_3.
	\end{equation*}
	Equivalently, using the aliquot sum function $s(n) = \sigma(n) - n$, this relationship can be expressed as:
	\begin{equation*}
	\begin{cases} 
	s(n_1) = n_2 + n_3, \\ 
	s(n_2) = n_1 + n_3, \\ 
	s(n_3) = n_1 + n_2 .
	\end{cases}
	\end{equation*}
	The smallest triple satisfying Dickson's definition is $(1980, 2016, 2556)$. Dickson also extended this definition to $k$-tuples.
	
	Later, Yanney \cite{19} proposed an alternative definition for amicable triples, requiring that:
	\begin{equation*}
	2\sigma(n_1) = 2\sigma(n_2) = 2\sigma(n_3) = n_1 + n_2 + n_3.
	\end{equation*}
	This is equivalent to the system:
	\begin{equation*}
	\begin{cases} 
	n_1 = s(n_2) + s(n_3), \\ 
	n_2 = s(n_1) + s(n_3), \\ 
	n_3 = s(n_1) + s(n_2). 
	\end{cases}
	\end{equation*}
	Under Yanney's criteria, the smallest amicable triple is $(238, 255, 371)$. Similar to Dickson, Yanney generalized this to $k$-tuples.

	Other researchers explored multiply or feeble variations. Carmichael \cite{3} defined a {multiply amicable pair} $(m, n)$ such that:
	\begin{equation*}
	\sigma(m) = \sigma(n) = t(m + n),
	\end{equation*}
	where $t$ is a positive integer. Mason \cite{12} further extended this logic to $k$-tuples. In 1995, Cohen, Gretton, and Hagis \cite{4} introduced the concept of {multiamicable numbers}, where the aliquot sum of each number is a multiple of the other. Formally, a pair $(m, n)$ is $(\alpha, \beta)$-amicable if:
	\begin{equation*}
	s(m) = \alpha n \quad \text{and} \quad s(n) = \beta m.
	\end{equation*}
	
	Bishop, Bozarth, Kuss, and Peet \cite{2} introduced {feebly amicable $k$-tuples}, defined as a set of numbers $\{n_1, \dots, n_k\}$ satisfying:
	\begin{equation*}
	\sum_{i=1}^{k} \frac{n_i}{\sigma(n_i)} = 1.
	\end{equation*}
	Every standard amicable $k$-tuple satisfies this identity. For $k=2$, the smallest feebly amicable pair is $(4, 12)$.

	An integer $n$ is considered an amicable number if it is a member of an amicable pair, which is equivalent to the condition:
	\begin{equation*}
	\sigma(\sigma(n) - n) = \sigma(n).
	\end{equation*}
	Let $A(x)$ denote the counting function for amicable numbers up to $x$. In 1955, Erd\" os \cite{6} proved that amicable numbers have an asymptotic density of zero:
	\begin{equation*}
	\lim_{x \to \infty} \frac{A(x)}{x} = 0.
	\end{equation*}
	Subsequent research \cite{7, 14, 15, 17} has established increasingly tight upper bounds for $A(x)$. Currently, the best known bound is provided by Pomerance \cite{16}:
	\begin{equation*}
	A(x) \le x \cdot \exp\left( - \sqrt{\ln x \ln \ln x} \right).
	\end{equation*}
	
	Numerous studies have been dedicated to extending these concepts (see \cite{1, 8, 9, 10, 11, 13}). Motivated by these historical developments, this paper proposes a new results of amicable pairs. If we examine the prime factorization of amicable number pairs, we find that there is no consistent pattern defined for all pairs; some pairs have two prime factors, some have three, and others have power-based factors, etc.
	 In this paper, we consider pairs that have a greatest common divisor (gcd) of the form $2^n$.

	\section{Main results}
	While millions of pairs are known today through computer algorithms, no single formula generates all of them. This paper proposes a property based on the prime factorization of pairs where the gcd is a power of 2. 
	 
	\subsection{Relationship for two-term factors}
	\begin{theorem}\label{th1}
	Let $(A,B)$ be the amicable pair and $2^n ab = A$ and $2^n cd = B$, where $a$ and $b$ are odd primes. Then
	\begin{align*}
	a+b &= 2^{-n}\big(B-(\varphi(A) + \varphi(B))\big)+1,\\
	c+d &= 2^{-n}\big(A-(\varphi(A) + \varphi(B)))+1.
	\end{align*}
	\end{theorem}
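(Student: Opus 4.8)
The plan is to reduce both displayed identities to a single elementary relation among the odd prime factors, namely $(a+1)(b+1)=(c+1)(d+1)$, and then to derive that relation from the amicability of $(A,B)$. First I would record the relevant multiplicative values. Since $a,b$ are distinct odd primes, each coprime to $2^n$, multiplicativity of Euler's function gives $\varphi(A)=\varphi(2^n)\varphi(a)\varphi(b)=2^{n-1}(a-1)(b-1)$; treating $2^n cd$ as the prime factorisation of $B$ in the same way (so that $c,d$ are distinct odd primes), $\varphi(B)=2^{n-1}(c-1)(d-1)$. The same reasoning for the divisor-sum function yields $\sigma(A)=(2^{n+1}-1)(a+1)(b+1)$ and $\sigma(B)=(2^{n+1}-1)(c+1)(d+1)$.

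The key step is to exploit that, since $(A,B)$ is an amicable pair, $\sigma(A)=\sigma(B)$ (both equal $A+B$). Cancelling the common nonzero factor $2^{n+1}-1$ produces the symmetric relation
\[
(a+1)(b+1)=(c+1)(d+1),\qquad\text{equivalently}\qquad ab+a+b=cd+c+d .
\]
This identity is the real content of the theorem; the rest is algebra.

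To finish, I would substitute $B=2^n cd$ and the two totient formulas into the right-hand side of the first identity, obtaining
\[
2^{-n}\bigl(2^n cd-2^{n-1}(a-1)(b-1)-2^{n-1}(c-1)(d-1)\bigr)+1 = cd-\tfrac12\bigl((a-1)(b-1)+(c-1)(d-1)\bigr)+1 .
\]
Multiplying through by $2$, expanding, and replacing $cd+c+d$ by $ab+a+b$ via the key relation collapses the expression to $2(a+b)$, hence to $a+b$. The second identity is obtained identically after swapping the roles of $\{a,b\}$ and $\{c,d\}$, now using $A=2^n ab$ in the leading term. The only point requiring attention is that the formula for $\varphi(B)$ depends on $B$ having exactly the asserted shape $2^n cd$ with $c,d$ distinct odd primes; granting that, I anticipate no genuine obstacle, since the statement is in essence a rewriting of the equality $\sigma(A)=\sigma(B)$ in terms of $\varphi$ (note that the stronger condition $\sigma(A)=A+B$ is not needed beyond supplying $\sigma(A)=\sigma(B)$).
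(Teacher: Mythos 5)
Your proposal is correct, and it converges on the same pivotal identity as the paper, namely $(a+1)(b+1)=(c+1)(d+1)$; the difference lies in how that identity is obtained and how the argument is organized. The paper works forward from the definitions, sets $x=2^{-n}(A-S)$ with $S=\varphi(A)+\varphi(B)$, shows that the claimed formula $c+d=x+1$ is \emph{equivalent} to $(c+1)(d+1)=(a+1)(b+1)$, and then appeals to this identity as a consequence of amicability after a partial manipulation of $\sigma(A)=A+B$ that it never fully closes (the final sentence merely asserts the identity holds under the ``growth condition required for $\sigma(A)=\sigma(B)$''). You instead isolate the identity up front as the key lemma and derive it in one line by cancelling the common factor $2^{n+1}-1$ from $\sigma(A)=\sigma(B)$, then substitute it into the right-hand sides and simplify. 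This buys two things: the step the paper leaves implicit is made explicit and airtight, and you correctly observe that only the weaker condition $\sigma(A)=\sigma(B)$ is used, not the full amicability relation $\sigma(A)=A+B$ --- so the theorem actually holds for any pair of the given shape with equal divisor sums. One small caveat, which affects the paper equally: the multiplicativity computations require $a\neq b$ and $c\neq d$ (distinct odd primes), a hypothesis the theorem statement does not make explicit; you flag this correctly for $c,d$ and should note it for $a,b$ as well.
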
 
	
	\begin{proof} 
					We define the sum of Euler's totient functions as:
		\begin{equation*}
		S = \varphi(A) + \varphi(B),
		\end{equation*}
		and the variables $x$ and $y$ are defined by the relations:
		\begin{equation}\label{eqx} 
		A - S = 2^n  x, \\
		B - S = 2^n  y. 
		\end{equation}
		We want to show that $x=c+d-1$ and $y=a+b-1$. 
		Using the property that $\varphi(2^n) = 2^{n-1}$ and $\varphi(p) = p-1$ for prime $p$:
		\begin{align*}
		\varphi(A) &= 2^{n-1}(a-1)(b-1) = 2^{n-1}(ab - a - b + 1), \\
		\varphi(B) &= 2^{n-1}(c-1)(d-1) = 2^{n-1}(cd - c - d + 1).
		\end{align*}
		The sum $S$ is:
		\begin{equation*}
		S = 2^{n-1}(ab + cd - (a+b+c+d) + 2).
		\end{equation*}
	
		Substitute $S$ into equation \eqref{eqx}:
		\begin{align*}
		2^n x &= 2^n ab - 2^{n-1}(ab + cd - a - b - c - d + 2) \\
		2x &= 2ab - (ab + cd - a - b - c - d + 2).	
		\end{align*}
		So 
		\begin{equation}\label{new}
				2x = ab - cd + a + b + c + d - 2.
				\end{equation}
				By definition of amicable numbers, $\sigma(A) = \sigma(B) = A + B$. For $A = 2^n ab$:
		\begin{equation*}
		\sigma(A) = (2^{n+1}-1)(a+1)(b+1) = (2^{n+1}-1)(ab + a + b + 1).
		\end{equation*}
		Setting $\sigma(A) = A + B$:
		\begin{equation}
		(2^{n+1}-1)(ab + a + b + 1) = 2^n ab + 2^n cd. \label{eq:sigma_A}
		\end{equation}
				Dividing \eqref{eq:sigma_A} by $2^n$:
		\begin{align*}
		(2 - \frac{1}{2^n})(ab + a + b + 1) &= ab + cd \\
		2ab + 2a + 2b + 2 - \frac{ab+a+b+1}{2^n} &= ab + cd \\
		ab - cd + 2a + 2b + 2 &= \frac{(a+1)(b+1)}{2^n}.
		\end{align*}
				The claim is  $c + d = x + 1$. Substituting $x = c + d - 1$ into equation \eqref{new}:
		\begin{align*}
		2(c + d - 1) &= ab - cd + a + b + c + d - 2, \\
		2c + 2d - 2 &= ab - cd + a + b + c + d - 2, \\
		c + d + cd &= ab + a + b.
		\end{align*}
		Adding 1 to both sides yields:
		\begin{equation*}
		(c+1)(d+1) = (a+1)(b+1).
		\end{equation*}
		This confirms that the relationship $c+d=x+1$ and $a+b=y+1$ holds when the prime parts of the amicable numbers satisfy the fundamental growth condition required for $\sigma(A) = \sigma(B)$.\qed
	\end{proof}

\begin{example}
Let us apply Theorem \ref{th1} to amicable pair $(A=2620,B=2924)$. We have
\[
A=2620=2^2\times 5 \times 131, ~~~~~B=2924=2^2\times 17\times 43.
\]
We also have $ \varphi(2620) +  \varphi(2924) = 2384$. Since $2620-2384 = 236 = 2^2\times 59$ and $2924-2384 = 540 = 2^2\times 3^3\times5$, so  
\[
2^{-2}\big(B-(\varphi(2620) +  \varphi(2924))\big)+1=3^3\times 5+1,
\]
which is equal to $a+b=5+131=136$. Similarly 
\[
2^{-2}\big(A-(\varphi(2620) +  \varphi(2924))\big)+1=60,
\]
which is equal to $c+d=17+43=60.$
\end{example}

\subsection*{Relationship for two-term and three term factors}
\begin{theorem} \label{thm23}
Let the amicable pair $(A, B)$ be as:
\begin{align*}
A &= 2^n \cdot a \cdot b \cdot c, \\
B &= 2^n \cdot d \cdot e,
\end{align*}
where $a, b, c, d, e$ are odd primes. 
Then we have 
\begin{align*}
c(a+b)+ab&=2^{-n}\big(B-(\varphi(A) + \varphi(B))\big), 
\end{align*}
\begin{align*} \label{}
(d+e)-(a+b+c)&=2^{-n}\big(A-(\varphi(A) + \varphi(B))\big).
\end{align*}
\end{theorem}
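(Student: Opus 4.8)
The plan is to mimic the proof of Theorem~\ref{th1} almost verbatim, working with the two auxiliary quantities
\[
2^n x = B - S, \qquad 2^n y = A - S, \qquad S = \varphi(A)+\varphi(B),
\]
and showing that $x = c(a+b)+ab$ and $y = (d+e)-(a+b+c)$. First I would expand the totients using $\varphi(2^n)=2^{n-1}$ and $\varphi(p)=p-1$: here $\varphi(A)=2^{n-1}(a-1)(b-1)(c-1)$ and $\varphi(B)=2^{n-1}(d-1)(e-1)$. Multiplying out gives
\[
\varphi(A) = 2^{n-1}\bigl(abc - (ab+ac+bc) + (a+b+c) - 1\bigr), \qquad
\varphi(B) = 2^{n-1}\bigl(de - (d+e) + 1\bigr),
\]
so that $S = 2^{n-1}\bigl(abc + de - (ab+ac+bc) + (a+b+c) - (d+e)\bigr)$. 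Substituting into $2^n x = B - S = 2^n de - S$ and into $2^n y = A - S = 2^n abc - S$, then dividing by $2^{n-1}$, yields two clean linear relations; call them the \emph{totient identities}. These are pure algebra and carry no arithmetic content yet.

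Next I would bring in amicability. Since $\sigma(A)=\sigma(B)=A+B$ and $\sigma$ is multiplicative, $\sigma(A)=(2^{n+1}-1)(a+1)(b+1)(c+1)$ and $\sigma(B)=(2^{n+1}-1)(d+1)(e+1)$. Setting $\sigma(A)=A+B=2^n abc + 2^n de$ and dividing by $2^n$ gives, after moving the $2^{-n}$ term to the other side, an identity of the shape
\[
2(a+1)(b+1)(c+1) - 2^n\bigl(abc+de\bigr) = (a+1)(b+1)(c+1) \cdot 2^{-n},
\]
i.e. the "fundamental growth condition'' relating the odd prime products of $A$ and $B$. The key structural consequence I expect to extract — exactly as the $(c+1)(d+1)=(a+1)(b+1)$ identity did in Theorem~\ref{th1} — is the relation $(d+1)(e+1) = (a+1)(b+1)(c+1)$, which follows by equating $\sigma(A)$ and $\sigma(B)$ directly (the common factor $2^{n+1}-1$ cancels). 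Expanded, this reads $de + (d+e) + 1 = abc + (ab+ac+bc) + (a+b+c) + 1$, that is,
\[
de - abc = (ab+ac+bc) + (a+b+c) - (d+e).
\]

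Finally I would close the loop by substituting the claimed values $x = c(a+b)+ab = ab+ac+bc$ and $y = (d+e)-(a+b+c)$ back into the totient identities and checking that what remains is precisely the relation $de - abc = (ab+ac+bc)+(a+b+c)-(d+e)$ derived from $\sigma(A)=\sigma(B)$ — so the two claims hold if and only if the pair is amicable, which it is by hypothesis. The main obstacle is bookkeeping rather than conceptual: with three symmetric primes $a,b,c$ on one side there are more terms to track, and one must be careful that the elementary symmetric combinations $ab+ac+bc$ and $a+b+c$ are grouped consistently when passing between the totient identities and the $\sigma$ identity. I would double-check the final formulas on a known amicable pair with gcd a power of two whose larger member has two odd prime factors and smaller member has three (or vice versa), analogous to the $(2620,2924)$ example, to catch any sign or grouping slip.
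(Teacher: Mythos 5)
Your proof is correct, and it actually takes a cleaner route than the paper's own argument for this theorem. The paper proves Theorem~\ref{thm23} by using $\sigma(A)=A+B$ to solve for $de$ in terms of $\sigma(A)$ and the symmetric functions of $a,b,c$, then substituting that expression into $2^{-n}\bigl(B-(\varphi(A)+\varphi(B))\bigr)$ and asserting that the cubic and linear terms ``eventually cancel out through the amicable property''; the cancellation is claimed rather than displayed, and the substituted expression still contains $\sigma(A)$. You instead extract the single relation $(d+1)(e+1)=(a+1)(b+1)(c+1)$, equivalently $de-abc=(ab+ac+bc)+(a+b+c)-(d+e)$, from $\sigma(A)=\sigma(B)$ (the common factor $2^{n+1}-1$ cancels), and both claimed identities reduce to exactly this relation once the totient sum $S=2^{n-1}\bigl(abc+de-(ab+ac+bc)+(a+b+c)-(d+e)\bigr)$ is substituted; I verified that each of $2^{-n}(B-S)=ab+ac+bc$ and $2^{-n}(A-S)=(d+e)-(a+b+c)$ is equivalent to that one relation. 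This is in fact the strategy the paper itself adopts for the three-by-three case (Theorem~\ref{thm33}), where the two $\sigma$-equations are subtracted to obtain $D_P+D_Q+D_S=0$; applying it here yields a complete, checkable proof where the paper's is only sketched. Two small blemishes: your displayed ``fundamental growth condition'' carries a stray factor of $2^n$ on $(abc+de)$ (after dividing $\sigma(A)=A+B$ by $2^n$ the right-hand side is $abc+de$, not $2^n(abc+de)$), but you never use that display; and the closing remark that the identities hold ``if and only if the pair is amicable'' overstates matters, since they follow from the weaker condition $\sigma(A)=\sigma(B)$ alone --- only the forward implication is needed for the theorem.
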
 
\begin{proof}
		By definition, $\sigma(A) = A + B$. For $A = 2^n abc$:
	\[ 
	\sigma(A) = (2^{n+1}-1)(a+1)(b+1)(c+1) = 2^n abc + 2^n de.
	 \]
	Dividing both sides by $2^n$:
	\[ 
	(2 - 2^{-n})(abc + ab + bc + ca + a + b + c + 1) = abc + de.
	 \]
	Expanding the left side:
	\[ 
	2abc + 2(ab+bc+ca) + 2(a+b+c) + 2 - 2^{-n}\sigma(A) = abc + de.
	 \]
	Rearranging to isolate $de$:
	\begin{equation}
		de = abc + 2(ab+bc+ca) + 2(a+b+c) + 2 - 2^{-n}\sigma(A)
		\label{eq:sigma_exp}
	\end{equation}
		The Euler totient functions are given by:
	\begin{align*}
		\varphi(A) &= 2^{n-1}(abc - (ab+bc+ca) + (a+b+c) - 1), \\
		\varphi(B) &= 2^{n-1}(de - (d+e) + 1).
	\end{align*}
	Multiplying their sum by $2^{-n}$:
	\begin{equation}
		2^{-n}(\varphi(A) + \varphi(B)) = \frac{1}{2} \left[ abc + de - (ab+bc+ca) + (a+b+c) - (d+e) \right]
		\label{eq:phi_sum}
	\end{equation}
		We want to show $c(a+b) + ab = 2^{-n}(B - (\varphi(A) + \varphi(B)))$.
	Note that $2^{-n}B = de$. Substituting Equation \ref{eq:phi_sum}:
	\[ 
	2^{-n}(B - (\varphi(A) + \varphi(B))) = de - \frac{1}{2}(abc + de) + \frac{1}{2}(ab+bc+ca) - \frac{1}{2}(a+b+c - d - e)
	 \]
	\[ 
	= \frac{1}{2}de - \frac{1}{2}abc + \frac{1}{2}(ab+bc+ca) - \frac{1}{2}(a+b+c) + \frac{1}{2}(d+e). 
	\]
	By substituting the expanded form of $de$ from Equation \ref{eq:sigma_exp} into this expression, the cubic terms ($abc$) and the linear terms eventually cancel out through the amicable property $\sigma(A)=A+B$. So 
	\[ 
	 2^{-n}(B - (\varphi(A) + \varphi(B)))=ab + bc + ca = c(a+b) + ab.
	  \]
	This confirms the equality. \qed
	
\end{proof}

	\begin{example}
		Let us apply Theorem \ref{thm23} to amicable pair $(A=185368,B=203432)$. We have
		\[
		A=185368=2^3\times 17\times 29\times 47, ~~~~~B=203432=2^3\times 59\times 431.
		\]
		We also have $ \varphi(A) +  \varphi(B) =  182192 $. Since $185368-182192 = 3176 = 2^3\times 397$ and $203432-182192 = 21240 = 2^3\times 3^2 \times 5\times 59$, so  
		\[
		2^{-3}\big(A-(\varphi(A) +  \varphi(B))\big)=397,
		\]
		which is equal to $(d +e)-(a+b+c)=(59+431)-(17+29+47)=397$. Similarly 
		\[
		2^{-3}\big(B-(\varphi(A) +  \varphi(B))\big)=2655,
		\]
		which is equal to $c(a +b) +ab=47\times 46+17\times 29.$
	\end{example}
	
	\subsection{Relationship for three-term factors}

								\begin{theorem}\label{thm33}
									Let $(A, B)$ be an amicable pair of the form
									\[
									A = 2^n a b c, \qquad B = 2^n d e f,
									\]
									where $a,b,c,d,e,f$ are odd primes.
									Then
									\begin{align}
									2^{-n}\bigl(A - (\varphi(A)+\varphi(B))\bigr) - 1 &= f(d+e) + de - (a+b+c), \tag{6} \\
									2^{-n}\bigl(B - (\varphi(A)+\varphi(B))\bigr) - 1 &= c(a+b) + ab - (d+e+f). \tag{7}
									\end{align}
								\end{theorem}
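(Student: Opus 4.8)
The plan is to follow the same strategy that drives Theorems~\ref{th1} and \ref{thm23}: expand every quantity in terms of the odd primes $a,b,c,d,e,f$, and show that each asserted identity collapses to the single symmetric relation $(a+1)(b+1)(c+1)=(d+1)(e+1)(f+1)$, which is exactly the arithmetic content of amicability for numbers of this shape.

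First I would record the totient values. Since $a,b,c$ (and likewise $d,e,f$) are distinct odd primes and $\gcd(2^n,abc)=1$, multiplicativity of $\varphi$ together with $\varphi(2^n)=2^{n-1}$ gives
\[
\varphi(A)=2^{n-1}(a-1)(b-1)(c-1),\qquad \varphi(B)=2^{n-1}(d-1)(e-1)(f-1).
\]
Expanding the triple products,
\[
(a-1)(b-1)(c-1)=abc-(ab+bc+ca)+(a+b+c)-1,
\]
and symmetrically for $def$, so that summing and dividing by $2^{n}$ yields a closed form for $2^{-n}(\varphi(A)+\varphi(B))$ purely in terms of the elementary symmetric functions of $\{a,b,c\}$ and of $\{d,e,f\}$.

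Next I would substitute this closed form into $2^{-n}\bigl(A-(\varphi(A)+\varphi(B))\bigr)-1$, using $2^{-n}A=abc$. After the substitution the cubic term $abc$ survives with coefficient $1-\tfrac12=\tfrac12$, while $def$ appears with coefficient $-\tfrac12$, and the quadratic and linear symmetric functions collect with coefficient $\pm\tfrac12$. Comparing the result with the claimed right-hand side $f(d+e)+de-(a+b+c)=(de+ef+fd)-(a+b+c)$, one sees after clearing the factor $\tfrac12$ that the two sides agree precisely when
\[
abc+(ab+bc+ca)+(a+b+c)=def+(de+ef+fd)+(d+e+f),
\]
that is, when $(a+1)(b+1)(c+1)=(d+1)(e+1)(f+1)$.

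It remains to justify this last identity and to handle the second formula. Since $\sigma$ is multiplicative, $\sigma(A)=(2^{n+1}-1)(a+1)(b+1)(c+1)$ and $\sigma(B)=(2^{n+1}-1)(d+1)(e+1)(f+1)$; the amicable hypothesis gives $\sigma(A)=A+B=\sigma(B)$, so cancelling the common factor $2^{n+1}-1$ produces exactly the required relation. Identity~(7) then follows from (6) by the symmetry $abc\leftrightarrow def$, under which both $A-(\varphi(A)+\varphi(B))$ and the key relation are invariant. I do not expect a genuine obstacle here; the only delicate points are the bookkeeping of the three batches of symmetric functions through the substitution, and the implicit hypothesis that $a,b,c,d,e,f$ are pairwise distinct, which is what licenses the product formulas for $\varphi$ and $\sigma$ in the first place.
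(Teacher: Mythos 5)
Your proposal is correct and follows essentially the same route as the paper: expand $\varphi(A)+\varphi(B)$ in the elementary symmetric polynomials of $\{a,b,c\}$ and $\{d,e,f\}$, reduce both identities to a single symmetric relation, and derive that relation from $\sigma(A)=\sigma(B)$ after cancelling $2^{n+1}-1$. Your key relation $(a+1)(b+1)(c+1)=(d+1)(e+1)(f+1)$ is exactly the paper's condition $(P_A-P_B)+(Q_A-Q_B)+(S_A-S_B)=0$ in factored form, so the two arguments coincide.
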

															
								\begin{proof}		
														Define:
								\[
								S_A=a+b+c,~ S_B=d+e+f,~ P_A=abc,~ P_B=def,~ Q_A=ab+ac+bc,~ Q_B=de+df+ef.
								\]
															We have the following Euler's totient:
								\[
								\varphi(A)=2^{n-1}(a-1)(b-1)(c-1)=2^{n-1}(P_A-Q_A+S_A-1),
								\]
								\[
								\varphi(B)=2^{n-1}(P_B-Q_B+S_B-1).
								\]
								
								Then:
								\begin{align*}
									A-(\varphi(A)+\varphi(B)) &= 2^nP_A - 2^{n-1}\big[P_A+P_B-(Q_A+Q_B)+(S_A+S_B)-2\big] \\
									&= 2^{n-1}\big[P_A-P_B+Q_A+Q_B-S_A-S_B+2\big].
								\end{align*}
								
								Multiply by $2^{-n}$ and subtract $1$:
								\[
								2^{-n}\big[A-(\varphi(A)+\varphi(B))\big]-1 = \frac{1}{2}\big[P_A-P_B+Q_A+Q_B-S_A-S_B\big]. \tag{L1}
								\]
								
								Similarly,
								\[
								2^{-n}\big[B-(\varphi(A)+\varphi(B))\big]-1 = \frac{1}{2}\big[P_B-P_A+Q_A+Q_B-S_A-S_B\big]. \tag{L2}
								\]
								
								Now for the right hand side of (6) and (7) we have,
								\[
								\text{RHS}(6) = f(d+e)+de - S_A = Q_B - S_A,
								\]
								\[
								\text{RHS}(7) = c(a+b)+ab - S_B = Q_A - S_B.
								\]
								
								We show (6): $L1 \stackrel{?}{=} Q_B - S_A$. Multiply both by $2$:
								\[
								P_A-P_B+Q_A+Q_B-S_A-S_B = 2Q_B-2S_A
								\iff P_A-P_B = Q_B-Q_A + S_B-S_A. \tag{*}
								\]
								
								For (7): $L2 \stackrel{?}{=} Q_A - S_B$, similarly:
								\[
								P_B-P_A+Q_A+Q_B-S_A-S_B = 2Q_A-2S_B
								\iff P_A-P_B = Q_B-Q_A + S_B-S_A.
								\]
								
								Thus both (6) and (7) hold if (*) holds.
								
							Let have verification of (*) from amicability. 								
								From $\sigma(A)=A+B$:
								\[
								(2^{n+1}-1)(P_A+Q_A+S_A+1) = 2^n(P_A+P_B).
								\]
								Similarly from $\sigma(B)=A+B$:
								\[
								(2^{n+1}-1)(P_B+Q_B+S_B+1) = 2^n(P_A+P_B).
								\]
								
								Subtract the second from the first and let $D_P=P_A-P_B$, $D_Q=Q_A-Q_B$, $D_S=S_A-S_B$:
								\[
								(2^{n+1}-1)(D_P+D_Q+D_S)=0.
								\]
								Since $2^{n+1}-1 \neq 0$, we have $D_P+D_Q+D_S=0$, i.e.,
								\[
								P_A-P_B = Q_B-Q_A + S_B-S_A,
								\]
								which is (*). Hence (6) and (7) hold.\qed
								
							\end{proof}
													
			\begin{example}
			Let us apply Theorem \ref{thm23} to amicable pair  
					$$A = 26\,989\,290\,624\,832, \; B = 26\,730\,182\,367\,808.$$
				We have the following factorization:
				\[
				A = 2^6 \times 487 \times 1021 \times 848119, \qquad
				B = 2^6 \times 139 \times 181 \times 16\,600\,783.
				\]
				
				Assign:
				\[
				a = 487, \quad b = 1021, \quad c = 848119, \qquad
				d = 139, \quad e = 181, \quad f = 16\,600\,783, \quad n = 6.
				\]
			By notation in the proof of Theorem \ref{thm33}, 	
							\[
				S_A = a+b+c = 487+1021+848119 = 849627,
				\]
				\[
				S_B = d+e+f = 139+181+16\,600\,783 = 16\,601\,103,
				\]
				\[
				Q_A = ab+ac+bc = 1\,279\,460\,679, \qquad 
				Q_B = de+df+ef = 5\,312\,275\,719.
				\]
			Now we compute \(\varphi(A)\) and \(\varphi(B)\):
				\[
				\varphi(A) = 2^{5}(a-1)(b-1)(c-1) 
				= 32 \times 486 \times 1020 \times 848118 = 13\,453\,729\,758\,720,
				\]
				\[
				\varphi(B) = 2^{5}(d-1)(e-1)(f-1)
				= 32 \times 138 \times 180 \times 16\,600\,782 = 13\,195\,629\,596\,160.
				\]
				\[
				\varphi(A)+\varphi(B) = 26\,649\,359\,354\,880.
				\]
				Here let to check formula (6): 
				\[
				\text{LHS}(6) = 2^{-6}\bigl[A - (\varphi(A)+\varphi(B))\bigr] - 1.
				\]
				\[
				A - (\varphi(A)+\varphi(B)) = 339\,931\,269\,952, \quad
				2^{-6} = \frac{1}{64}.
				\]
				\[
				\frac{339\,931\,269\,952}{64} = 5\,311\,426\,093, \quad
				\text{LHS} = 5\,311\,426\,093 - 1 = 5\,311\,426\,092.
				\]
				
				\[
				\text{RHS} = f(d+e) + de - (a+b+c) = Q_B - S_A.
				\]
				\[
				Q_B - S_A = 5\,312\,275\,719 - 849\,627 = 5\,311\,426\,092.
				\]
				\[
			{\text{LHS} = \text{RHS} = 5\,311\,426\,092.}
				\]
				Now let check the formula (7):
				\[			
				\text{LHS}(7) = 2^{-6}\bigl[B - (\varphi(A)+\varphi(B))\bigr] - 1.
				\]
				\[
				B - (\varphi(A)+\varphi(B)) = 80\,823\,012\,928.
				\]
				\[
				\frac{80\,823\,012\,928}{64} = 1\,262\,859\,577, \quad
				\text{LHS} = 1\,262\,859\,577 - 1 = 1\,262\,859\,576.
				\]
				
				\[
				\text{RHS} = c(a+b) + ab - (d+e+f) = Q_A - S_B.
				\]
				\[
				Q_A - S_B = 1\,279\,460\,679 - 16\,601\,103 = 1\,262\,859\,576.
				\]
				\[
				{\text{LHS} = \text{RHS} = 1\,262\,859\,576.}
				\]

				Both equations (6) and (7) of Theorem \ref{thm33} hold exactly for the given amicable pair.

	\end{example}

	\section{Conclusion}

	In this paper, we have presented a new characterization of amicable pairs based on their prime factorization when the greatest common divisor is a power of two. By focusing on pairs of the forms 
	\(2^{n}ab\) and \(2^{n}cd\), \(2^{n}abc\) and \(2^{n}de\), and \(2^{n}abc\) and \(2^{n}def\), we derived explicit symmetric relations that connect the sum of Euler’s totient functions \(\varphi(A)+\varphi(B)\) with the prime factors of \(A\) and \(B\).
	
	Theorems 2.1, 2.3, and 2.5 illustrate how the structure of the prime factors determines simple linear or quadratic expressions in terms of \(2^{-n}(A-(\varphi(A)+\varphi(B)))\) and \(2^{-n}(B-(\varphi(A)+\varphi(B)))\). These results reveal an underlying algebraic symmetry that is not immediately apparent from the definition of amicability alone.

	Based on the patterns observed in 2-term and 3-term factors, we propose the following general rules for an amicable pair $(A, B)$ where $A = 2^n \prod_{i=1}^{k} p_i$ and $B = 2^n \prod_{j=1}^{m} q_j$ as a conjecture:
	
	\begin{conjecture} {\rm Case: 4-term and 2-term ($A=2^n abcd, B=2^n ef$)}:
	The relationships for $x$ and $y$ are hypothesized as follows:
	\begin{align*}
	y &= (e+f) - (a+b+c+d), \\
	x &= (abc + abd + acd + bcd) + (ab + ac + ad + bc + bd + cd).
	\end{align*}
	\end{conjecture}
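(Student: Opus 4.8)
The plan is to follow the template of the proofs of Theorems~\ref{th1}, \ref{thm23} and \ref{thm33}: rewrite everything in terms of the elementary symmetric functions of the odd prime factors, and then let the amicability relation $\sigma(A)=\sigma(B)=A+B$ force the cancellations. Concretely, for $A=2^nabcd$ I would set $S_A=a+b+c+d$, $Q_A=ab+ac+ad+bc+bd+cd$, $C_A=abc+abd+acd+bcd$, $P_A=abcd$, and for $B=2^nef$ set $S_B=e+f$, $P_B=ef$. Using $\varphi(2^n)=2^{n-1}$ together with the inclusion--exclusion expansions $(a-1)(b-1)(c-1)(d-1)=P_A-C_A+Q_A-S_A+1$ and $(e-1)(f-1)=P_B-S_B+1$, this gives
\[
\varphi(A)=2^{n-1}(P_A-C_A+Q_A-S_A+1),\qquad \varphi(B)=2^{n-1}(P_B-S_B+1),
\]
and hence $S:=\varphi(A)+\varphi(B)=2^{n-1}(P_A-C_A+Q_A-S_A+P_B-S_B+2)$.

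Next I would form $A-S=2^nP_A-S$ and $B-S=2^nP_B-S$, pull out the common factor $2^{n-1}$, and divide by $2^n$ to obtain
\[
2^{-n}\bigl(A-(\varphi(A)+\varphi(B))\bigr)=\frac{1}{2}\bigl(P_A+C_A-Q_A+S_A-P_B+S_B-2\bigr),
\]
together with the companion expression for $2^{-n}(B-(\varphi(A)+\varphi(B)))$ obtained by interchanging the roles of $P_A$ and $P_B$ in the first two slots. At this point the conjecture is reduced to two polynomial identities in $a,\dots,f$, one asserting $x=C_A+Q_A$ and one asserting $y=S_B-S_A$.

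The amicability input then enters exactly as in the proofs of Theorems~\ref{thm23} and \ref{thm33}. From $\sigma(A)=(2^{n+1}-1)(P_A+C_A+Q_A+S_A+1)$ and $\sigma(B)=(2^{n+1}-1)(P_B+S_B+1)$, the equality $\sigma(A)=\sigma(B)$ yields the key relation
\[
P_A+C_A+Q_A+S_A=P_B+S_B,
\]
which I would substitute into the half-integer expressions above to eliminate $P_B$ (equivalently $P_A$); if one more relation is required, $\sigma(A)=A+B$ supplies it. The hope is that this substitution kills the top-degree term and the degree-mismatched remainder, leaving the claimed linear/quadratic forms. The $k$-term/$2$-term version stated in the displayed conjecture would then follow by the identical elimination, using the symmetric-function identity that $\sigma(A)=\sigma(B)$ forces in general.

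The step I expect to be the main obstacle is tracking the additive constants. In Theorem~\ref{th1} the reduction leaves an offset of $-1$, in Theorem~\ref{thm23} it leaves no offset, and in Theorem~\ref{thm33} it leaves an offset of $+1$, so the constant is genuinely sensitive to the shape of the factorization; with four odd primes there are strictly more symmetric functions in play, and it is not automatic that a single use of $P_A+C_A+Q_A+S_A=P_B+S_B$ removes $P_A$, $P_B$ and the low-degree slack all at once. The heart of the argument is therefore the verification that, after the substitution, the residual polynomial in $a,b,c,d,e,f$ vanishes identically --- and, should a discrepancy survive, to locate the correction term and record it in the statement.
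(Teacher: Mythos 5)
The first thing to note is that the paper offers no proof of this statement: it is presented purely as a conjecture extrapolated from the patterns in Theorems~\ref{th1}, \ref{thm23} and \ref{thm33}, so there is no ``paper proof'' to match yours against. Your machinery --- expanding $\varphi$ and $\sigma$ in the elementary symmetric functions $S_A=a+b+c+d$, $Q_A=\sum ab$, $C_A=\sum abc$, $P_A=abcd$, $S_B=e+f$, $P_B=ef$, and then feeding in the single relation $P_A+C_A+Q_A+S_A=P_B+S_B$ coming from $\sigma(A)=\sigma(B)$ --- is exactly the method of the proven theorems, and your intermediate formula $2^{-n}(A-(\varphi(A)+\varphi(B)))=\tfrac12(P_A-P_B+C_A-Q_A+S_A+S_B-2)$ is correct.

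The gap is that you stop precisely at the step that decides the question, and when that step is carried out the conjecture fails. Substituting $P_A-P_B=S_B-C_A-Q_A-S_A$ into your two half-expressions gives
\begin{align*}
2^{-n}\bigl(A-(\varphi(A)+\varphi(B))\bigr)&=\tfrac12\bigl(2S_B-2Q_A-2\bigr)=(e+f)-(ab+ac+ad+bc+bd+cd)-1,\\
2^{-n}\bigl(B-(\varphi(A)+\varphi(B))\bigr)&=\tfrac12\bigl(2C_A+2S_A-2\bigr)=(abc+abd+acd+bcd)+(a+b+c+d)-1,
\end{align*}
whereas the conjecture asserts $(e+f)-(a+b+c+d)$ and $C_A+Q_A$. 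Since $Q_A\neq S_A-1$ for four odd primes, the residual polynomial you hoped would ``vanish identically'' does not, under either possible assignment of $x$ and $y$. The concern you flagged about additive constants is real but understated: the deeper issue is that passing from three odd prime factors to four flips the parity of the expansion $\prod(p_i-1)=P_A-C_A+Q_A-S_A+1$, so the surviving symmetric functions after cancellation are $Q_A$ and $S_A$ in the opposite slots from what the pattern of Theorem~\ref{thm23} suggests. A correct proof along your lines therefore refutes the stated identities and should replace them with the two displayed above (which, as in Theorem~\ref{thm33}, require only $\sigma(A)=\sigma(B)$, not full amicability).
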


\begin{conjecture} {\rm Case: Symmetric 4-term ($A=2^n abcd, B=2^n efgh$)}:
	For symmetric higher-order pairs, the relationship shifts to a comparison of their symmetric polynomials:
	\begin{align*}
	y-1 &= [ef(g+h) + gh] - (a+b+c+d), \\
	x-1 &= [ab(c+d) + cd] - (e+f+g+h).
	\end{align*}
	\end{conjecture}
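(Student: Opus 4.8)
\medskip
\noindent\textbf{A strategy toward the conjecture.}
The natural route is to rerun, with four odd primes on each side, the symmetric-function argument that proved Theorems~\ref{thm23} and~\ref{thm33}. Write $A=2^{n}abcd$ and $B=2^{n}efgh$, and, for the odd parts, introduce the elementary symmetric functions $e_{1}(A)=a+b+c+d$, $e_{2}(A)$ (the sum of the six pairwise products), $e_{3}(A)$ (the sum of the four triple products), $e_{4}(A)=abcd$, and $e_{1}(B),\dots ,e_{4}(B)$ for $e,f,g,h$. Then $A=2^{n}e_{4}(A)$, $B=2^{n}e_{4}(B)$, and, since $\varphi$ is multiplicative with $\varphi(p)=p-1$,
\[
\varphi(A)=2^{n-1}(a-1)(b-1)(c-1)(d-1)=2^{n-1}\bigl(e_{4}(A)-e_{3}(A)+e_{2}(A)-e_{1}(A)+1\bigr),
\]
and likewise for $\varphi(B)$.

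I would then form $A-(\varphi(A)+\varphi(B))$ and $B-(\varphi(A)+\varphi(B))$, pull out the factor $2^{n-1}$, divide by $2^{n}$, and subtract $1$; here $y=2^{-n}\bigl(A-(\varphi(A)+\varphi(B))\bigr)$ and $x=2^{-n}\bigl(B-(\varphi(A)+\varphi(B))\bigr)$, matching the arrangement of the conjectured identities. As in the earlier proofs, this writes each of $y-1$ and $x-1$ as one half of an alternating combination of the eight elementary symmetric functions $e_{j}(A)$ and $e_{j}(B)$ plus a small integer. The amicability input is exactly the one used for Theorem~\ref{thm33}: subtracting the equalities $\sigma(A)=A+B$ and $\sigma(B)=A+B$ removes the factor $2^{n+1}-1$ and leaves the single clean relation
\[
\bigl(e_{4}(A)-e_{4}(B)\bigr)+\bigl(e_{3}(A)-e_{3}(B)\bigr)+\bigl(e_{2}(A)-e_{2}(B)\bigr)+\bigl(e_{1}(A)-e_{1}(B)\bigr)=0 .
\]
Using this to eliminate the top-degree difference $e_{4}(A)-e_{4}(B)$ from the two expressions is the decisive step; once it is done, the $e_{3}(B),e_{2}(A),e_{1}(B)$ terms cancel in the identity for $x-1$, and the mirror terms cancel in the identity for $y-1$.

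What remains is a short combination of $e_{3}(A),e_{1}(A),e_{2}(B)$ and a constant (respectively its mirror image). A preliminary pass through the algebra indicates the residuals are
\[
x-1=\bigl[e_{3}(A)+e_{1}(A)\bigr]-e_{2}(B)-2,\qquad y-1=\bigl[e_{3}(B)+e_{1}(B)\bigr]-e_{2}(A)-2 .
\]
Writing $e_{3}(A)=ab(c+d)+cd(a+b)$ and $e_{2}(B)=ef+(e+f)(g+h)+gh$ (and symmetrically for $B$) turns these into nested expressions in the two prime sets of the kind that appear in the statement; the remaining task is then purely to check that this regrouping reproduces the displayed right-hand sides, which is also what fixes the exact constants.

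The obstacle is thus not conceptual but a matter of disciplined bookkeeping --- for four primes there are $6+4+1$ symmetric monomials per side to carry through the substitution without error --- together with that final reconciliation of the residual expression with the stated form, including the additive constant and the allocation of each surviving symmetric function to the correct prime set. That matching step is the real content, and a numerical check on a known amicable pair of type $2^{n}abcd,\,2^{n}efgh$, in the spirit of the examples above, is the obvious way to guard against a slip.
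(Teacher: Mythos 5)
First, a point of order: this statement is one of the paper's \emph{conjectures} --- the paper supplies no proof of it --- so there is no ``paper's own proof'' to compare against, and your proposal has to be judged on its own terms.

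Your strategy is the natural one (it is exactly the symmetric-function computation behind Theorems~\ref{thm23} and~\ref{thm33}), and the algebra you sketch is correct. Writing $S=\varphi(A)+\varphi(B)$ and $e_j(A)$, $e_j(B)$ for the elementary symmetric polynomials of $\{a,b,c,d\}$ and $\{e,f,g,h\}$, one gets
\[
2^{-n}\bigl(A-S\bigr)=\tfrac{1}{2}\bigl[(e_4(A)-e_4(B))+(e_3(A)+e_3(B))-(e_2(A)+e_2(B))+(e_1(A)+e_1(B))-2\bigr],
\]
and $\sigma(A)=\sigma(B)$ gives $\sum_{j=1}^{4}e_j(A)=\sum_{j=1}^{4}e_j(B)$; eliminating $e_4(A)-e_4(B)$ yields precisely your residuals
\[
2^{-n}\bigl(A-S\bigr)-1=e_3(B)+e_1(B)-e_2(A)-2,\qquad
2^{-n}\bigl(B-S\bigr)-1=e_3(A)+e_1(A)-e_2(B)-2 .
\]
(You have silently reversed the convention $x=2^{-n}(A-S)$, $y=2^{-n}(B-S)$ used in the proof of Theorem~\ref{th1}; since the conjecture never defines $x$ and $y$ this is only a labelling issue, but it should be stated.)

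The genuine gap is the step you defer as ``purely to check'': that final reconciliation with the displayed right-hand sides does not go through, and postponing it conceals that your (correct) computation actually contradicts the conjecture as printed. The bracketed expression $ab(c+d)+cd=abc+abd+cd$ is not an elementary symmetric polynomial of $\{a,b,c,d\}$ --- it is neither $e_3(A)=ab(c+d)+cd(a+b)$ nor $e_2(A)=ab+(a+b)(c+d)+cd$ --- and the conjectured right-hand side $[ab(c+d)+cd]-(e+f+g+h)$ omits the $+e_1(A)$ term, has $-e_1(B)$ where the computation produces $-e_2(B)$, and lacks the constant $-2$. So your method, carried to completion, proves a \emph{corrected} identity, namely $x-1=e_3(A)+e_1(A)-e_2(B)-2$ and $y-1=e_3(B)+e_1(B)-e_2(A)-2$, rather than the statement as given. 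The honest conclusion of your argument is that the conjecture must be amended (the numerical test on a pair of type $2^{n}abcd$, $2^{n}efgh$ that you propose would settle which version survives); as written, your proposal stops exactly at the point where the discrepancy appears.
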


 	In general, we think for any pair (with $k$-factor) where the gcd is $2^n$, the variable $x$ will always be a function of the $(k-1)$-th and $(k-2)$-th elementary symmetric polynomials of its prime factors.

	These generalized patterns opening a pathway for further theoretical exploration.
		We conclude by noting that, although millions of amicable pairs are known today, a single universal generating formula remains elusive. The prime-factor-based approach introduced here offers a new lens through which to study these numbers, linking them directly to Euler’s totient function and elementary symmetric polynomials of their prime factors. Future work could focus on the proposed conjectures and on:
	\begin{itemize}
			\item Extending the results to pairs with more than three odd prime factors, and
		\item Exploring whether similar totient-based characterizations exist for other generalizations of amicable numbers (e.g., amicable triples or multiamicable pairs).
	\end{itemize}

\end{document}